\def\d{\hbox{d}}
\newtheorem{theorem}{Theorem}[section]
\newtheorem{proposition}{Proposition}[section]
\newtheorem{lemma}{Lemma}[section]
\newproof{proof}{Proof}
\newproof{dem-final}{Proof of Lemma \ref{lema-multiplicador}}
\newtheorem{notation}{Notation}
\begin{document}
\begin{frontmatter}
\title{An analytic approach to the ergodic theory of stochastic variational inequalities \footnote{This research was partially supported by a grant from CEA, Commissariat \`a l'\'energie atomique and by the National Science Foundation under grant DMS-0705247. A large part of this work was completed while one of the authors was visiting the University of Texas at Dallas and the Hong-Kong Polytechnic University. We wish to thank warmly these institutions for the hospitality and support.}}


\selectlanguage{english}
\author[authorlabel1]{Alain Bensoussan},
\ead{alain.bensoussan@utdallas.edu}
\author[authorlabel2]{Laurent Mertz}
\ead{mertz@ann.jussieu.fr}

\address[authorlabel1]{International Center for Decision and Risk Analysis, School of Management, University of Texas at Dallas, Box 830688, Richardson, Texas 75083-0688,
Graduate School of Business, the Hong Kong Polytechnic University,
Graduate Department of Financial Engineering, Ajou University.
This research in the paper was supported by WCU (World Class University) program through the National Research Foundation of Korea funded by the Ministry of Education, Science and Technology (R31 - 20007).}
\address[authorlabel2]{Universit\'e Pierre et Marie Curie-Paris 6,
Laboratoire Jaques Louis Lions, 4 place jussieu 75005 Paris.}
\begin{abstract}
{In an earlier work made by the first author with J. Turi (Degenerate Dirichlet Problems Related to the Invariant Measure of Elasto-Plastic Oscillators, AMO, 2008), the solution of a stochastic variational inequality modeling an elasto-perfectly-plastic oscillator has been studied. The existence and uniqueness of an invariant measure have been proven. Nonlocal problems have been introduced in this context. In this work, we present a new characterization of the invariant measure. The key finding is the connection between nonlocal PDEs and local PDEs which can be interpreted with short cycles of the Markov process solution of the stochastic variational inequality.
\vskip 0.5\baselineskip
\noindent{\bf R\'esum\'e}
\vskip 0.5\baselineskip
{\bf Une approche analytique de la th\'eorie ergodique des in\'equations variationnelles stochastiques.}
Dans un travail pr\'ec\'edent du premier auteur en collaboration avec Janos Turi (Degenerate Dirichlet Problems Related to the Invariant Measure of Elasto-Plastic Oscillators, AMO, 2008), la solution d'une in\'equation variationnelle stochastique mod\'elisant un oscillateur \'elastique-parfaitement-plastique a \'et\'e \'etudi\'ee. L'existence et l'unicit\'e d'une mesure invariante ont \'et\'e prouv\'ees. Des probl\`emes nonlocaux ont \'et\'e introduits dans ce contexte.
La conclusion importante est la connexion entre des EDPs nonlocales et des EDPs locales qui peuvent \^etre interpr\'et\'ees comme les cycles courts du processus de Markov solution de l'in\'equation variationnelle stochastique.}
\end{abstract}
\end{frontmatter}

\section*{Version fran\c{c}aise abr\'eg\'ee}
La dynamique de l'oscillateur \'elastique-parfaitement-plastique s'exprime \`a l'aide d'une \'equation \`a m\'emoire (voir (\ref{chap2:hysteresis1})-(\ref{chap2:hysteresis2})). A. Bensoussan et J. Turi ont montr\'e que la relation entre la vitesse et la composante \'elastique de l'oscillateur est un processus de Markov ergodique qui satisfait une in\'equation variationnelle stochastique (voir (\ref{chap2:svi})). La solution admet une mesure invariante caract\'eris\'ee par dualit\'e \`a l'aide d'une \'equation aux d\'eriv\'ees partielles avec des conditions de bord non-locales (voir (\ref{chap2:ulambda})). Dans ce travail, une nouvelle preuve de la th\'eorie ergodique est pr\'esent\'ee ainsi qu'une nouvelle caract\'erisation de l'unique distribution invariante. Dans ce contexte, nous d\'eduisons des nouvelles formules reliant des \'equations aux d\'eriv\'ees partielles avec des conditions de bord non-locales \`a des probl\`emes locaux (voir (\ref{chap2:link})). 
\selectlanguage{english}
\section{Introduction}
In the engineering literature, the dynamics of the elastic-perfectly-plastic (EPP) oscillator has been formulated as a process $x(t)$ which stands for the displacement of the oscillator, evolving with hysteresis. The evolution is defined by the problem
\begin{equation}
\label{chap2:hysteresis1}
\ddot{x} + c_0 \dot{x} + \mathbf{F}(x(s), 0 \leq s \leq t) = \dot{w}
\end{equation}
with initial conditions of displacement and velocity $x(0)=x, \quad \dot{x}(0) = y$.
Here $c_0>0$ is the viscous damping coefficient, $k>0$ the stiffness, $w$ is a Wiener process; $\mathbf{F}(x(s), 0 \leq s \leq t)$ is a nonlinear functional which depends on the entire trajectory $\{ x(s), 0 \leq s \leq t\}$ up to time $t$. The plastic deformation denoted by $\Delta(t)$ at time $t$ can be recovered from the pair $(x(t),\mathbf{F}(x(s), 0 \leq s \leq t))$ by the following relationship:
\begin{equation}\label{chap2:hysteresis2}
\mathbf{F}(x(s), 0 \leq s \leq t)= 
\left\{
\begin{array}{rcl}
k Y & if & x(t) = Y + \Delta(t),\\
k (x(t) - \Delta(t)) & if & x(t)  \in ]-Y+ \Delta(t),Y+ \Delta(t)[,\\
-k Y & if & x(t) = -Y + \Delta(t).\\
\end{array}
\right.
\end{equation}
where $Y$ is an elasto-plastic bound. Such elasto-plastic oscillator is simple and representative of the elasto-plastic behavior of a class of structure dominated by their first mode 
of vibration, they are employed to estimate prediction of failure of mechanical structures.
Karnopp \& Scharton \cite{KarSchar} proposed a separation between elastic states and plastic states and introduced a fictitious variable $z(t) := x(t) - \Delta(t)$.\\

Recently, the right mathematical framework of stochastic variational inequalities (SVI) modeling an EPP oscillator with noise has been introduced by one of the authors in \cite{BenTuri1}. 
Although SVI have been already studied in \cite{BenLio} to represent reflection-diffusion processes in convex sets, no connection with random vibration had been made so far.
The inequality governs the relationship between the velocity $y(t)$ and the variable $z(t)$:  
\begin{equation}\label{chap2:svi}
\d y(t)  = -(c_0 y(t) + k z(t)) \d t + \d w(t), \quad (\d z(t)-y(t)\d t)(\phi - z(t)) \geq 0, \quad \forall \vert  \phi \vert \leq Y, \quad \vert z(t) \vert \leq Y.
\end{equation}
Let us introduce some notations.
\begin{notation}
$D := \mathbb{R} \times (-Y,+Y), \quad D^+ := (0,\infty) \times \{Y \}, \quad D^- := (-\infty,0) \times \{-Y \}$, 
and the differential operators
$A \zeta :=    -\frac{1}{2} \zeta_{yy} + (c_0y+kz)  \zeta_{y}  - y \zeta_z, \quad B_+\zeta :=  -\frac{1}{2} \zeta_{yy} + (c_0y + kY)  \zeta_y, \quad 
B_-\zeta :=   -\frac{1}{2} \zeta_{yy} + (c_0y - kY)  \zeta_y.$
where $\zeta$ is a regular function on $D$.
\end{notation}
In \cite{BenTuri1}, it has been shown that the probability distribution of $(y(t),z(t))$ converges to an asymptotic probability measure on  $D \cup D^+ \cup D^-$ namely $\nu$. Moreover, $\nu$ is the unique invariant distribution of $(y(t),z(t))$. 
In addition, from \cite{BenTuri2} we know also that there exists a unique solution $u_\lambda$ to the following partial differential equation (PDE)
\begin{equation}
\lambda u_{\lambda} + A u_{\lambda} =  f \quad \textup{in} \quad D, \quad \lambda u_{\lambda} + B_+ u_{\lambda}  =   f \quad \textup{in} \quad D^+ , \quad \lambda u_{\lambda} + B_- u_{\lambda}  =  f  \quad \textup{in} \quad D^-\\
\label{chap2:ulambda}
\end{equation}
with the nonlocal boundary conditions given by the fact that $u_{\lambda}(y,Y)$ and $u_{\lambda}(y,-Y)$ are continuous, where $\lambda >0$ and $f$ is a bounded measurable function.
The function $u_\lambda$ satisfies $\| u_\lambda \|_{\infty} \leq \frac{\| f \|_{\infty}}{\lambda}$, $u_{\lambda}$ is continuous
and for all  $(\eta, \zeta) \in \bar{D}$, we have $\lim_{\lambda \to 0 } \lambda u_\lambda (\eta, \zeta) = \nu(f)$. 
We use the notation $u_\lambda(y,z;f)$.\\

Now, we introduce the short cycles to provide a new proof of the ergodic theory for (\ref{chap2:svi}). In this context, we derive new formulas linking PDEs with nonlocal boundary conditions to local problems.

\subsection{Short cycles}
Let $\lambda >0$, consider $v_{\lambda}(y,z)$ the solution of 
\begin{equation}
\lambda v_{\lambda} + A v_{\lambda} =  f \quad \textup{in} \quad D, \quad \lambda v_{\lambda} + B_+ v_{\lambda}  =  f  \quad \textup{in} \quad D^+ , \quad \lambda v_{\lambda} + B_- v_{\lambda}  = f  \quad \textup{in} \quad D^-\\
\label{chap2:vlambda}
\end{equation}
with the local boundary conditions $v_{\lambda}(0^+,Y)=0$ and $v_{\lambda}(0^-,-Y)=0$.
Also, if $f$ is symmetric (resp. antisymmetric) then $v_\lambda$ is symmetric (resp. antisymmetric). We use the notation $v_\lambda(y,z;f)$.
As $\lambda \to 0$, $v_\lambda \to v$ with
\begin{equation}\tag{$P_v$}
 A v =  f \quad \textup{in} \quad D, \quad  B_+ v  =  f \quad \textup{in} \quad D^+ , \quad  B_- v  =  f \quad \textup{in} \quad D^-\\
\label{chap2:v}
\end{equation}
with the local boundary conditions $v(0^+,Y)=0$ and $v(0^-,-Y)=0$.
We use the notation $v(y,z;f)$. We call $v(y,z;f)$ \underline{a short cycle}.
We detail the solution of \eqref{chap2:v} in the next section.
We introduce next $\pi^+(y,z)$ and $\pi^-(y,z)$ such that
\begin{equation}
A \pi^+  =  0 \quad \textup{in} \quad D, \quad \pi^+ = 1 \quad \textup{in} \quad D^+, \quad \pi^+  = 0 \quad \textup{in} \quad D^-
\label{chap2:piplus}
\end{equation}
and
\begin{equation}
A \pi^- = 0  \quad \textup{in} \quad D, \quad \pi^- = 0  \quad \textup{in} \quad D^+, \quad \pi^-  = 1 \quad \textup{in} \quad D^-.
\label{chap2:pimoins}
\end{equation}
We have $\pi^+ + \pi^- = 1$, so the existence and uniqueness of a bounded solution to (\ref{chap2:piplus}) and (\ref{chap2:pimoins}) are clear. 
A new formulation of the invariant distribution is given by the following theorem.
\begin{theorem}[New formulation of the invariant distribution $\nu$]
\label{chap2:th1}
Let $f$ be a bounded measurable function on $\bar{D}$, we have the following analytical characterization of the invariant distribution:
\[
\nu(f) = \frac{v(0^-,Y;f)+ v(0^+,-Y;f)}{2v(0^+,Y;1)}.
\]
Denote $\nu_\lambda(f) := \frac{v_\lambda(0^-,Y;f) + v_\lambda(0^+,-Y;f)}{2v_\lambda(0^-,Y;1)}$. 
As $\lambda \to 0$,
\begin{equation} \label{chap2:e23}
 u_{\lambda}(y,z;f) - \frac{\nu_{\lambda}(f)}{\lambda} \to u(y,z;f), \quad  \nu_\lambda(f) \to \nu(f)
\end{equation}
where $u$ satisfies
\begin{equation}
A u  =  f - \nu(f) \quad \textup{in} \quad D, \quad B_+u \quad = \quad f - \nu(f) \quad \textup{in} \quad D^+, \quad B_-u  \quad  = \quad f - \nu(f) \quad \textup{in} \quad D^-
\label{chap2:e24}
\end{equation}
with the nonlocal boundary conditions given by the fact that
\[
u(y,Y) \quad \mbox{and} \quad u(y,-Y) \quad \mbox{ are continuous}.
\]
Then, we obtain also the representation formula 
\begin{equation}\label{chap2:link}
u(y,z;f) =  v(y,z;f) - \nu(f) v(y,z;1) + \frac{\pi^+(y,z)-\pi^-(y,z)}{4 \pi^-(0^-,Y)} (v(0^-,Y;f)-v(0^+,-Y;f))
\end{equation}
\end{theorem}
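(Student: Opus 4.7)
The key idea is to express $u_\lambda$ as $v_\lambda$ plus a two-parameter correction built from homogeneous solutions, to determine the two parameters from the nonlocal matching conditions, and to resolve the $\lambda \to 0$ limit through an exact algebraic identity rather than a delicate asymptotic expansion.

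First I introduce $w_\lambda^+$ (resp.\ $w_\lambda^-$) as the bounded solution of the fully homogeneous problem $(\lambda + A) w = 0$ on $D$, $(\lambda + B_\pm) w = 0$ on $D^\pm$, with local corner data $w_\lambda^+(0^+, Y) = 1$, $w_\lambda^+(0^-, -Y) = 0$ and the mirror values for $w_\lambda^-$. Since $u_\lambda - v_\lambda$ satisfies this fully homogeneous problem, uniqueness for $\lambda > 0$ gives the decomposition
\begin{equation*}
u_\lambda = v_\lambda + a_\lambda w_\lambda^+ + b_\lambda w_\lambda^-, \qquad a_\lambda = u_\lambda(0^+, Y), \quad b_\lambda = u_\lambda(0^-, -Y).
\end{equation*}
Imposing the two nonlocal continuity conditions on $u_\lambda$ produces a $2 \times 2$ linear system for $(a_\lambda, b_\lambda)$ with right-hand side $(v_\lambda(0^-, Y; f),\, v_\lambda(0^+, -Y; f))$; the symmetry $(y, z) \mapsto (-y, -z)$ exchanges $w_\lambda^+$ and $w_\lambda^-$ and decouples this system on the symmetric and antisymmetric combinations $(a_\lambda \pm b_\lambda)/2$.

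The crucial ingredient is the exact identity $1 - w_\lambda^+ - w_\lambda^- = \lambda\, v_\lambda(\,\cdot\,; 1)$ on $\bar D$, which holds because both sides solve the same $(\lambda + \cdot)$-problem with vanishing corner data. Evaluated at $(0^-, Y)$ it collapses the symmetric equation to $(a_\lambda + b_\lambda)/2 = \nu_\lambda(f)/\lambda$, so $\nu_\lambda(f)$ emerges automatically rather than by fiat. The antisymmetric denominator $1 - w_\lambda^+(0^-, Y) + w_\lambda^-(0^-, Y)$ tends to $2\pi^-(0^-, Y)$ (using $\pi^+ + \pi^- = 1$ together with $w_\lambda^\pm \to \pi^\pm$), so $(a_\lambda - b_\lambda)/2$ admits the finite limit $(v(0^-, Y; f) - v(0^+, -Y; f))/(4 \pi^-(0^-, Y))$.

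Substituting the identity back into the decomposition and regrouping yields
\begin{equation*}
u_\lambda - \frac{\nu_\lambda(f)}{\lambda} = v_\lambda - \nu_\lambda(f)\, v_\lambda(\,\cdot\,; 1) + \frac{a_\lambda - b_\lambda}{2}\,(w_\lambda^+ - w_\lambda^-).
\end{equation*}
Passing to the limit $\lambda \to 0$ produces the representation formula \eqref{chap2:link}; matching against the known convergence $\lambda u_\lambda \to \nu(f)$ then identifies $\nu_\lambda(f) \to \nu(f)$ and yields the announced expression for $\nu(f)$. Finally, \eqref{chap2:e24} and its nonlocal boundary conditions follow by passing to the limit in the PDE for $u_\lambda - \nu_\lambda(f)/\lambda$, whose zero-order term $\lambda u_\lambda - \nu_\lambda(f)$ vanishes in the limit. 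The principal obstacle is the apparent $0/0$ behavior of the $2 \times 2$ system at $\lambda = 0$; the exact identity above short-circuits what would otherwise require a careful second-order expansion of $w_\lambda^\pm$ in $\lambda$.
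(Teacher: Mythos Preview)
Your proof is correct and uses the same building blocks as the paper---the local-boundary solution $v_\lambda$ and the homogeneous solutions $\pi_\lambda^\pm$ (your $w_\lambda^\pm$; note that $(\lambda+B_-)w_\lambda^+=0$ on $D^-$ with $w_\lambda^+(0^-,-Y)=0$ forces $w_\lambda^+\equiv 0$ there, so the two definitions agree). The organizational difference is that the paper first decomposes $f=f_{\mathrm{sym}}+f_{\mathrm{asym}}$ and handles the two halves with separate one-parameter corrections: for symmetric $f$ it uses the single homogeneous solution $\tfrac{1}{\lambda}-v_\lambda(\cdot;1)$, and for antisymmetric $f$ it uses $\pi_\lambda^+-\pi_\lambda^-$. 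You instead write one two-parameter ansatz for general $f$, set up the $2\times 2$ matching system, and decouple it via the symmetry of the coefficient matrix. Your exact identity $1-w_\lambda^+-w_\lambda^-=\lambda\, v_\lambda(\cdot;1)$ is precisely what links the two viewpoints (it shows the paper's $\tfrac{1}{\lambda}-v_\lambda(\cdot;1)$ equals $\tfrac{1}{\lambda}(w_\lambda^++w_\lambda^-)$) and makes the appearance of $\nu_\lambda(f)/\lambda$ as the symmetric coefficient automatic rather than guessed-and-verified. The paper's split is marginally more elementary because each half needs only a scalar equation; your version is more unified and explains transparently why the $0/0$ degeneracy in the symmetric channel resolves without any second-order expansion in $\lambda$.
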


\section{Analysis of the short cycles}
We describe the solution of \eqref{chap2:v}.
We can write $v(y,z;f) = v_e(y,z;f) + v^+(y,z;f) + v^-(y,z;f)$ with $v_e, v^+,v^-$ satisfying
\begin{equation}
A v_e =  f(y,z) \quad  \textup{in} \quad D, \quad v_e  =  0  \quad \textup{in} \quad D^+, \quad v_e    = 0 \quad \textup{in} \quad D^-,
\label{chap2:ve}
\end{equation}
\begin{equation}
A v^+  =  0  \quad \textup{in} \quad D, \quad v^+(y,Y)  =  \varphi^+(y;f)  \quad \textup{in} \quad D^+,\quad v^+    =  0 \quad \textup{in} \quad D^-,
\label{chap2:vplus}
\end{equation}
and
\begin{equation}
A v^-  =  0   \quad \textup{in}  \quad D, \quad v^-  =  0   \quad \textup{in}  \quad D^+, \quad v^-(y,-Y)  =  \varphi^-(y;f)  \quad \textup{in}  \quad D^-.
\label{chap2:vmoins}
\end{equation}
where $\varphi^+(y;f)$ and $\varphi^-(y;f)$ are defined by
\begin{equation}\label{chap2:phiplus}
-\frac{1}{2} \varphi_{yy}^+ + (c_0y+kY)\varphi_y^+  = f(y,Y), \quad y >0, \quad \varphi^+(0^+;f)=0
\end{equation}
and
\begin{equation}\label{chap2:phimoins}
-\frac{1}{2} \varphi_{yy}^- + (c_0y-kY)\varphi_y^-  = f(y,-Y), \quad y <0, \quad \varphi^-(0^-;f)=0.
\end{equation}
We check easily the formula $\varphi^+(y;f) = 2 \int_0^{\infty} d \xi \exp(-(c_0 \xi^2 + 2 kY \xi)) \int_{\xi}^{\xi + y} f(\zeta;Y) \exp(-2 c_0 \xi (\zeta - \xi)) d \zeta$, if  $y \geq 0$
and also $\varphi_-(y;f) = 2 \int_0^{\infty} d \xi \exp(-(c_0 \xi^2 - 2 kY \xi)) \int_{y-\xi}^{-\xi} f(\zeta;-Y) \exp(-2 c_0 \xi (\zeta - \xi)) d \zeta$, if $y \leq 0$.
\subsection{Solution to Problem \eqref{chap2:ve}}
The proof will be based on solving a sequence of Interior Exterior Dirichlet problems and a fixed point argument. Thus, we need to state the two following lemmas as preliminary results.
It is sufficient to consider $f =1$, with no loss of generality.
\subsubsection{Interior Dirichlet problem}
We begin with the interior problem, let  $D_1 := (-\bar{y}_1,\bar{y}_1) \times (-Y,Y), \quad D_1^+ := [0,\bar{y}_1) \times \{ Y \}, \quad D_1^- := (-\bar{y}_1,0] \times \{ -Y \}$.
Let us consider the space $C_1^+$ of continuous functions on $[-Y,Y]$ which are $0$ on $Y$ and the space $C_1^-$ of continuous functions on $[-Y,Y]$ which are $0$ on $-Y$.
Let $\varphi^+ \in C_1^+$ and $\varphi^- \in C_1^-$. We consider the problem
\begin{equation}
\label{chap2:interior}
-\frac{1}{2} \zeta_{yy} + (c_0 y + kz) \zeta_y - y \zeta_z = 1 \quad \mbox{in} \quad D_1, \quad \zeta(y,Y) =  0 \quad \mbox{in} \quad D_1^+, \quad \zeta(y,-Y)  =  0\quad \mbox{in} \quad D_1^-
\end{equation}
with $\zeta(\bar{y}_1,z) =  \varphi^+(z)$ and $\zeta(-\bar{y}_1,z)  =  \varphi^-(z)$, if  $-Y < z < Y.$
\begin{lemma}
There exists a unique bounded solution to the equation \eqref{chap2:interior}.
\end{lemma}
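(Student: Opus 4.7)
My plan is to combine elliptic regularization with a maximum-principle argument tailored to the degenerate operator $A$. Although $A$ has no second derivative in $z$, it is hypoelliptic: the Lie bracket
\[
[\partial_y,\,(c_0 y+kz)\partial_y - y\partial_z] \;=\; c_0\partial_y - \partial_z
\]
together with $\partial_y$ generates the full tangent space. The associated diffusion $dY_t = -(c_0 Y_t+kZ_t)\,dt + dW_t$, $dZ_t = Y_t\,dt$ exits the bounded $D_1$ in finite, integrable time. Because $\dot Z_t = Y_t$, any first exit through $\{z=Y\}$ forces $Y_\tau\geq 0$, and any exit through $\{z=-Y\}$ forces $Y_\tau\leq 0$; thus $(Y_\tau,Z_\tau)$ lies almost surely on the ``exit'' part of $\partial D_1$, which is precisely where \eqref{chap2:interior} prescribes data.

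Uniqueness follows at once. If $\zeta_1,\zeta_2$ are two bounded classical solutions, then $w=\zeta_1-\zeta_2$ satisfies $Aw=0$ in $D_1$ with $w=0$ on the exit boundary. It\^o's formula applied to $w$ along the diffusion stopped at $\tau$ gives $w(y,z)=E_{y,z}[w(Y_\tau,Z_\tau)] = 0$.

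For existence, I would introduce the uniformly elliptic operator $A_{\varepsilon}:=A-\varepsilon\partial_{zz}$ and extend the boundary data continuously to the whole of $\partial D_1$ by setting $\zeta_\varepsilon = 0$ on the two missing arcs $(-\bar{y}_1,0)\times\{Y\}$ and $(0,\bar{y}_1)\times\{-Y\}$; this extension is consistent with the prescribed zero values at the corners $(0,\pm Y)$. Classical elliptic theory produces a unique $\zeta_\varepsilon\in C^{2,\alpha}(D_1)\cap C(\bar{D}_1)$ solving $A_\varepsilon\zeta_\varepsilon=1$. The maximum principle yields $\|\zeta_\varepsilon\|_{\infty}\leq C$ uniformly in $\varepsilon$, and hypoellipticity of $A$ provides interior H\"older estimates uniform in $\varepsilon$. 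A subsequence converges locally uniformly to some $\zeta\in C(D_1)$ satisfying $A\zeta=1$ in $D_1$. Continuity up to the exit boundary is obtained by elementary barriers: on the vertical sides $\{y=\pm\bar{y}_1\}$ the nondegeneracy in $y$ provides standard elliptic barriers, while on $[0,\bar{y}_1]\times\{Y\}$ the function $Y-z$ is a natural barrier since $A(Y-z)=y\geq 0$ (symmetrically on $[-\bar{y}_1,0]\times\{-Y\}$).

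The hard part will be controlling $\zeta_\varepsilon$ near the non-exit arcs $(-\bar{y}_1,0)\times\{Y\}$ and $(0,\bar{y}_1)\times\{-Y\}$ as $\varepsilon\to 0$. The artificial zero Dirichlet data there produces a boundary layer of width $\sim\sqrt{\varepsilon}$, and one must verify that this layer neither propagates into the interior nor destroys continuity at the corners $(0,\pm Y)$ where it meets the genuine data. The fact that the characteristic direction of the first-order part $-y\partial_z$ points \emph{into} $D_1$ along these arcs keeps the layer geometrically localized, and the matching of zero values at the corners preserves continuity in the limit; making this rigorous via carefully chosen corner barriers is the delicate technical point.
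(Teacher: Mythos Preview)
Your probabilistic uniqueness argument and vanishing-viscosity existence scheme are sound in outline, but the paper takes a much shorter and more elementary route that sidesteps precisely the difficulty you flag as ``delicate.'' The paper reduces the lemma to a single a priori bound obtained by an explicit barrier: with $\theta(y,z)=\exp\bigl(\lambda c_0(y^2+kz^2)\bigr)$ and $\lambda>\max(1,1/c_0)$, one checks that $H:=-(\theta+\zeta)$ satisfies $AH>0$ in $D_1$, so the degenerate maximum principle forces the minimum of $H$ onto the exit part of $\partial D_1$, giving $0\le\zeta\le\exp\bigl(\lambda c_0(\bar y_1^2+Y^2)\bigr)$ directly. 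No regularization is introduced, no boundary layer appears, and the non-exit arcs $(-\bar y_1,0)\times\{Y\}$ and $(0,\bar y_1)\times\{-Y\}$ never enter the argument because the maximum principle for $A$ itself already selects the correct boundary portion. Your route would in fact need this same barrier anyway: the claim that ``the maximum principle yields $\|\zeta_\varepsilon\|_\infty\le C$ uniformly in $\varepsilon$'' is not automatic, since the standard elliptic constant degenerates as $\varepsilon\to0$, and the remedy is exactly a barrier of the above type (which also dominates $A_\varepsilon$ uniformly). So the paper's method is both simpler and supplies the ingredient missing from your sketch.
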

\begin{proof}
It is sufficient to prove an a priori bound. For that we can assume $\varphi^+, \varphi^-=0$. Consider $\lambda>0$ and the function
$\theta(y,z) = \exp(\lambda c_0(y^2 + kz^2))$ then $-\frac{1}{2} \theta_{yy} + (c_0y + kz) \theta_y - y\theta_z = \theta \left ( -\lambda c_0 + 2 \lambda c_0^2 y^2 (1-\lambda) \right )$.
Set next $H := - (\theta + \zeta)$ then 
\begin{equation}
\label{chap2:H}
-\frac{1}{2} H_{yy} + (c_0y + kz) H_y - y H_z = -1 + \theta \left ( \lambda c_0 - 2 \lambda c_0^2 y^2 (1-\lambda) \right ).
\end{equation}
If we pick $\lambda > \max(1,\frac{1}{c_0})$ the right hand side of \eqref{chap2:H} is positive. Therefore the minimum of $H$ can occur only on the boundary $y = \bar{y}_1$ and $z=Y$ with $y>0$ or $z=-Y$ with $y<0$. It follows that $H(y,z) \geq -\exp(\lambda c_0 (\bar{y}_1^2 + Y^2))$
and thus also
$0 \leq \zeta \leq \exp(\lambda c_0 (\bar{y}_1^2 + Y^2))$.
\end{proof}
\subsubsection{Exterior Dirichlet problems}
Now, we proceed by considering two exterior Dirichlet problems.
Let $0< \bar{y} < \bar{y}_1$, we define $D_{\bar{y}<y} := \{ y > \bar{y}, \quad -Y < z < Y\}$, $D_{\bar{y}<y}^+ := \{ y > \bar{y}, \quad z = Y\}$ and $D_{y<-\bar{y}} := \{ y < -\bar{y}, \quad -Y < z < Y\}$, $D_{y<-\bar{y}}^- := \{ y < -\bar{y}, \quad z = -Y\}$
and consider
\begin{equation}\label{chap2:exteriorp}
-\frac{1}{2} \eta_{yy}^+ + (c_0 y + kz) \eta_y^+ - y \eta_z^+ = 1 \quad \mbox{in} \quad D_{\bar{y}<y}, \quad \eta^+(y,Y)  =   0 \quad \mbox{in} \quad D_{\bar{y}<y}^+
\end{equation}
with the condition $\eta^+(\bar{y},z) = \zeta(\bar{y},z)$ if $-Y < z <Y$, and
\begin{equation}\label{chap2:exteriorm}
-\frac{1}{2} \eta_{yy}^- + (c_0 y + kz) \eta_y^- - y \eta_z^- = 1 \quad \mbox{in} \quad D_{y<-\bar{y}}, \quad \eta^-(y,-Y)  =   0 \quad \mbox{in} \quad D_{y<-\bar{y}}^-
\end{equation}
with the condition $\eta^-(-\bar{y},z) = \zeta(-\bar{y},z)$, if $-Y < z <Y$.
We use the same notation $\eta(y,z)$ for the two problems \eqref{chap2:exteriorp},\eqref{chap2:exteriorm} for the convenience of the reader. We have
\begin{lemma}
For any $\bar{y}>0$ there exists a unique bounded solution of (\ref{chap2:exteriorp}),(\ref{chap2:exteriorm}). 
\end{lemma}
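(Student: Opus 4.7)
The plan is to treat \eqref{chap2:exteriorp} in detail; the case \eqref{chap2:exteriorm} is entirely analogous. The strategy is truncation combined with a uniform barrier.

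First I would construct the key barrier. The linear function $\psi(y,z) := Y - z$ satisfies $A\psi = y \geq \bar{y} > 0$ throughout $D_{\bar{y}<y}$, and $\psi = 0$ on $D_{\bar{y}<y}^+$. Setting $\Psi(y,z) := (Y-z)/\bar{y} + \|\zeta(\bar{y},\cdot)\|_\infty$, one finds $A\Psi \geq 1$, $\Psi \geq 0$ on $D_{\bar{y}<y}^+$, and $\Psi(\bar{y},z) \geq |\zeta(\bar{y},z)|$. Comparing $\eta^+$ with $\pm \Psi$ via the maximum principle (legitimate for bounded solutions on the unbounded strip, since the uncontrolled portion $\{z=-Y,\, y>\bar{y}\}$ is characteristic-inflow: the transport $-y\partial_z$ points strictly inward there because $y > \bar{y} > 0$) yields
\[
|\eta^+(y,z)| \leq \frac{Y-z}{\bar{y}} + \|\zeta(\bar{y},\cdot)\|_\infty, \qquad (y,z)\in D_{\bar{y}<y},
\]
a bound \emph{independent of $y$}. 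Uniqueness of bounded solutions is an immediate consequence.

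For existence I would truncate. For $R > \bar{y}$, set $D_R := (\bar{y},R) \times (-Y,Y)$ and solve for $\eta_R^+$ on $D_R$ with the conditions of \eqref{chap2:exteriorp} on $\{z=Y,\, \bar{y}<y<R\}$ and on $\{y=\bar{y}\}$, together with the auxiliary Dirichlet condition $\eta_R^+(R,z) = 0$. No condition is imposed on $\{z=-Y,\, \bar{y}<y<R\}$, which is compatible with the characteristic structure there. The problem on the bounded domain $D_R$ is then of exactly the type treated in the preceding lemma, and its barrier argument (perturbation by $\exp(\lambda c_0(y^2+kz^2))$) produces a unique bounded $\eta_R^+$. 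I would then pass to the limit $R\to\infty$: since $\Psi$ is independent of $R$, $\|\eta_R^+\|_\infty$ is controlled uniformly in $R$, and interior Schauder estimates for the $y$-uniformly elliptic operator $A$, combined with the smoothness of the first-order $z$-transport, give $C^{2,\alpha}_{\mathrm{loc}}$ compactness on $D_{\bar{y}<y}$. A diagonal extraction yields a bounded limit $\eta^+$ satisfying the PDE in the interior, and local barriers near $\{y=\bar{y}\}$ and $\{z=Y\}$ ensure that the boundary data are attained continuously.

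The principal difficulty is the interplay of degeneracy in $z$ and unboundedness in $y$: classical elliptic theory does not apply directly, and a naive truncation would lose control as $R\to\infty$. The construction succeeds because $\psi = Y-z$ is bounded and yet $A\psi = y$ grows precisely in the direction where a uniform estimate is needed, so the a priori bound survives the limit. A secondary subtlety is the corner point $(\bar{y},Y)$, where characteristic and non-characteristic boundary portions meet; continuity of $\eta^+$ there requires a separate local barrier construction of the kind used in Step 1.
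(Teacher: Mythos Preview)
Your argument is correct and coincides with the paper's in its essential step: both identify the supersolution $\Psi(y,z)=\|\zeta\|_\infty+(Y-z)/\bar y$, observe that $A\Psi=y/\bar y\geq 1$ on $D_{\bar y<y}$ while $\Psi$ dominates the boundary data, and deduce the uniform bound $\|\eta\|_\infty\leq\|\zeta\|_\infty+2Y/\bar y$. The paper stops there, declaring that ``it is sufficient to prove the bound''; your truncation-and-limit construction simply makes that implicit existence step explicit.
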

\begin{proof}
It is sufficient to prove the bound, we claim that  $\| \zeta \|_{\infty} \leq \eta(y,z) \leq \| \zeta \|_{\infty} + \frac{Y-z}{\bar{y}}$, for $y > \bar{y}$
and
$\| \zeta \|_{\infty} \leq \eta(y,z) \leq \| \zeta \|_{\infty} + \frac{Y+z}{\bar{y}}$, for $y < -\bar{y}$.
Consider for instance $\rho(z) = \| \zeta \|_{\infty} + \frac{Y-z}{\bar{y}}$ for  $y > \bar{y}, -Y< z < Y$ then $-\frac{1}{2} \rho_{yy} + \rho_y (c_0 y + kz) - y \rho_z = \frac{y}{\bar{y}} > 1$, $\rho(\bar{y},z) = \| \zeta \|_{L^\infty} + \frac{Y-z}{\bar{y}} > \zeta(\bar{y},z)$, $\rho(\bar{y},z) = \| \zeta \|_{L^\infty} > 0$.
So clearly $\eta(y,z) \leq \rho(z)$.
So in all cases we can assert that $\| \eta \|_{\infty} \leq \| \zeta \|_{\infty} + \frac{2Y}{\bar{y}}$.
\end{proof}
\subsubsection{Solution to Problem \eqref{chap2:ve}}
\begin{proposition}\label{chap2:p1}
There exists a unique bounded solution to Problem \eqref{chap2:ve}.
\end{proposition}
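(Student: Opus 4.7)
The plan is to construct $v_e$ by gluing interior and exterior pieces via a Banach fixed-point argument on the two boundary traces at $y = \pm\bar{y}_1$. Fix $0 < \bar{y} < \bar{y}_1$, and let $X$ be the Banach space of pairs $(\varphi^+, \varphi^-)$ of continuous functions on $[-Y, Y]$ satisfying $\varphi^+(Y) = \varphi^-(-Y) = 0$, equipped with the sup norm. I would define an operator $T : X \to X$ as follows. Given $(\varphi^+, \varphi^-)$, first solve the interior Dirichlet problem \eqref{chap2:interior} with lateral data $\zeta(\bar{y}_1, \cdot) = \varphi^+$, $\zeta(-\bar{y}_1, \cdot) = \varphi^-$ using the first lemma to produce $\zeta$ on $\bar{D}_1$; then solve the two exterior Dirichlet problems \eqref{chap2:exteriorp}, \eqref{chap2:exteriorm} with lateral data $\eta^+(\bar{y}, \cdot) = \zeta(\bar{y}, \cdot)$ and $\eta^-(-\bar{y}, \cdot) = \zeta(-\bar{y}, \cdot)$ using the second lemma; finally set $T(\varphi^+, \varphi^-) := \bigl(\eta^+(\bar{y}_1, \cdot),\,\eta^-(-\bar{y}_1, \cdot)\bigr)$. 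A fixed point of $T$ will correspond, after gluing, to a bounded solution of \eqref{chap2:ve}.

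The key step, and the main obstacle, is to show that $T$ is a strict contraction on $X$. By linearity, $T(\varphi_1) - T(\varphi_2)$ arises from running the same construction with $f \equiv 0$ and boundary data $\varphi_1 - \varphi_2$, so one is reduced to the homogeneous problem. The interior maximum principle applied to the difference $\zeta_1 - \zeta_2$ gives $\|\zeta_1 - \zeta_2\|_\infty \leq \|\varphi_1 - \varphi_2\|_X$, and this trace estimate is inherited by $\eta^\pm$ at the exterior boundary $y = \pm\bar{y}$. The gain that produces a contraction constant $c < 1$ comes from evaluating $\eta^\pm$ at the interior point $y = \pm\bar{y}_1$ of the exterior region rather than at its boundary: since the underlying drift in $z$ is $y$ and the exterior regions enforce $|y| > \bar{y} > 0$, in $\{y > \bar{y}\}$ the $z$-component is driven at rate at least $\bar{y}$ toward the absorbing face $D^+_{\bar{y}<y}$ (where $\eta^+$ vanishes), killing a uniform fraction of the boundary mass before reaching $y = \bar{y}_1$. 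I would quantify this by constructing a supersolution of $A\theta \geq 0$ in the spirit of the function $\rho$ used in the proof of the second lemma, obtaining $|\eta^\pm(\pm\bar{y}_1, z)| \leq c \,\|\zeta(\pm\bar{y}, \cdot)\|_\infty$ with $c = c(\bar{y}, \bar{y}_1, Y, c_0) < 1$; failing a direct bound, iterating $T$ and proving that a power $T^n$ contracts would suffice equally well.

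With the contraction in hand, Banach's fixed-point theorem produces a unique $\varphi^* \in X$. Define $v_e$ piecewise by the associated $\zeta$ on $D_1$, $\eta^+$ on $\{y > \bar{y}\} \times (-Y, Y)$, and $\eta^-$ on $\{y < -\bar{y}\} \times (-Y, Y)$. Consistency on the overlap cells $\{\bar{y} < \pm y < \bar{y}_1\} \times (-Y, Y)$ follows from the uniqueness parts of the two lemmas: in each overlap, $\zeta$ and $\eta^\pm$ satisfy the same PDE with the same zero data on the relevant portion of $\{z = \pm Y\}$ and identical data at both lateral faces $y = \pm\bar{y}$ (by construction of $T$) and $y = \pm\bar{y}_1$ (by the fixed-point identity), hence coincide. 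The glued function $v_e$ is bounded by the sup estimates of the two lemmas and solves \eqref{chap2:ve} on $D$. Uniqueness of bounded solutions follows by the same contraction argument: the difference of two bounded solutions has lateral traces at $y = \pm\bar{y}_1$ that form a fixed point of the linear homogeneous version of $T$, hence vanish, and the interior/exterior uniqueness lemmas propagate this to $v_e^1 \equiv v_e^2$ throughout $D$.
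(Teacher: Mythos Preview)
Your proposal is correct and follows essentially the same interior--exterior gluing and contraction-mapping route as the paper. The paper's only notable departure is in the contraction step: rather than building an explicit supersolution in the spirit of $\rho$, it introduces the auxiliary solution $\psi$ of the homogeneous exterior problem with data $\psi(\bar y,\cdot)\equiv 1$ and obtains $\sup_{z}\psi(\bar y_1,z)<1$ directly from the strong maximum principle (and it invokes the maximum principle, not the contraction, for uniqueness).
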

\begin{proof}
Uniqueness comes from maximum principle.
Setting $\Phi = (\varphi^+(z),\varphi^-(z))$ and using the notation $\Phi(\bar{y}_1,z) = \varphi^+(z), \quad \Phi(-\bar{y}_1,z) = \varphi^-(z)$,
we can next define $\Gamma \Phi(\bar{y}_1,z) = \eta(\bar{y}_1,z)$ and $\Gamma \Phi(-\bar{y}_1,z) = \eta(-\bar{y}_1,z)$.
We thus have defined a map $\Gamma$ from $C_1^+,C_1^-$ into itself.
If $\Gamma$ has a fixed point, then it is clear that the function
\[
v_e(y,z) = 
\left\{
\begin{array}{l}
\zeta(y,z), \quad -\bar{y}_1 < y < \bar{y}_1,\\
\eta(y,z), \quad y > \bar{y}, \quad y < -\bar{y} 
\end{array}
\right.
\]
is a solution of \eqref{chap2:ve} since $\zeta = \eta \quad \mbox{for} \quad \bar{y} < y < \bar{y}_1, z \in (-Y,Y) \quad \mbox{and for} \quad -\bar{y}_1 < y < -\bar{y}, z \in (-Y,Y)$  
and the required regularity is available at boundary points $\bar{y},\bar{y}_1, -\bar{y}, - \bar{y}_1$. The result will follow from the property : $\Gamma$ is a contraction mapping.
This property will be an easy consequence of the following result. Consider the exterior problem
\begin{equation}\label{chap2:psi}
-\frac{1}{2} \psi_{yy} + \psi_y(c_0y + kz) - y \psi_z = 0 \quad \mbox{in} \quad D_{\bar{y}<y}, \quad
\psi(y,Y) =  0 \quad  \mbox{in} \quad D_{\bar{y}<y}^+, 
\end{equation}
where $\psi(\bar{y},z) = 1 \quad \mbox{if} \quad -Y<z<Y$, then $\sup_{-Y< z < Y} \psi(\bar{y}_1, z) \leq \rho < 1$.

Indeed if $\sup_{ -Y< z < Y } \psi(\bar{y}_1, z) =1$, then the maximum is attained on the line $y=\bar{y}_1$, and this is impossible because it cannot be at $z=Y$, nor at $z=-Y$, nor at the interior, by maximum principle considerations.
\end{proof}
\subsection{Solution to Problems \eqref{chap2:vplus} and \eqref{chap2:vmoins}}
We now consider the function $\varphi^+$ and $\varphi^-$ solution of \eqref{chap2:phiplus} and \eqref{chap2:phimoins}. 
Note that if $y<0$, we have $\varphi^-(y;1)=\varphi^+(-y;1)$. So it is sufficient to consider \eqref{chap2:phiplus} and we easily see that
\[
\varphi^+(y;1) = \int_0^{\infty} \exp(-(c_0 \xi^2 + 2 kY \xi)) \frac{1-\exp(-2c_0y\xi)}{2 c_0 \xi} \d \xi, \quad  \mbox{if} \quad  y >0
\]
and we have
$\varphi^+(y;1) \leq \frac{1}{c_0} \log(\frac{c_0 y + kY}{kY}), \quad \mbox{if} \quad y >0$.
We next want to solve the problem \eqref{chap2:vplus}. 
We proceed as follow. We extend $\varphi^+$ for $y<0$, by a function which is $C^2$ on $\mathbb{R}$ and with compact support on $y<0$. It is convenient to call $\varphi(y)$ the $C^2$ function on $\mathbb{R}$, with compact support for $y<0$ and $\varphi(y)=\varphi^+(y;1)$ for $y>0$. We set 
$w^+(y,z) = v^+(y,z) - \varphi(y)$
then we obtain the problem
\begin{equation}\label{chap2:wplus}
A w^+  =  g \quad  \mbox{in} \quad D, \quad 
w^+(y,Y) = 0, \quad  \mbox{in} \quad D^+, \quad
w^+(y,-Y) = -\varphi(y) , \quad  \mbox{in} \quad D^-
\end{equation}
with
$g(y,z) = - \left ( -\frac{1}{2}\varphi_{yy} + (c_0y+kz) \varphi_y \right )$.
\\\\
But, 
$g(y,z) = \mathbf{1}_{\{ y > 0 \}} \left (-1 + k(Y-z) \varphi_y(y) \right )$ + $\mathbf{1}_{\{ y < 0 \}} \left (-(-\frac{1}{2} \varphi_{yy} + (c_0 y + kz)\varphi_y ) \right )$ 
and thus, taking into account the definition of $\varphi$ when $y<0$, we can assert that $g(y,z)$ is a bounded function. Again, from the definition of $\varphi(y)$ when $y<0$, we obtain that on the boundary, $w^+$ is bounded. It follows from what was done for Problem \eqref{chap2:ve} that \eqref{chap2:wplus} has a unique solution. So we can state the following proposition.

\begin{proposition}\label{chap2:p2}
There exists a unique solution to \eqref{chap2:vplus} of the form $v^+(y,z) = \varphi^+(y) \mathbf{1}_{\{ y > 0 \}} + \tilde{v}^+(y,z)$
where  $\tilde{v}^+(y,z)$ is bounded. Similarly, there exists a unique solution to \eqref{chap2:vmoins} of the form $v^-(y,z) = \varphi^-(y) \mathbf{1}_{\{ y < 0 \}} + \tilde{v}^-(y,z)$
where  $\tilde{v}^-(y,z)$ is bounded.
\end{proposition}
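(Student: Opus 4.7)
The plan is to make rigorous the reduction already sketched immediately before the proposition: push the singular contribution of $\varphi^+$ into an explicit boundary layer $\varphi(y)\mathbf{1}_{\{y>0\}}$ and show that the remainder $\tilde v^+$ solves a problem that falls within the scope of Proposition \ref{chap2:p1}. Concretely, I would first fix a $C^2$ extension $\varphi:\R\to\R$ of $\varphi^+(\cdot;1)$ that agrees with $\varphi^+$ for $y\ge 0$ and has compact support in $\{y<0\}$; such an extension exists because $\varphi^+$ and all its derivatives at $0^+$ are finite (they can be read off the explicit integral formula for $\varphi^+(y;1)$). Then I set $w^+(y,z):=v^+(y,z)-\varphi(y)$, so that finding $v^+$ in the stated form is equivalent to finding a bounded $w^+$ solution to \eqref{chap2:wplus}.

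Next I verify that \eqref{chap2:wplus} has bounded data. Since $\varphi$ depends only on $y$, $Aw^+=-A\varphi=-(-\tfrac12\varphi_{yy}+(c_0y+kz)\varphi_y)=:g$. For $y>0$, using that $\varphi=\varphi^+(\cdot;1)$ satisfies \eqref{chap2:phiplus} with $f\equiv 1$, one gets $g=-1+k(Y-z)\varphi_y(y)$, and $\varphi_y(y)=\int_0^\infty e^{-(c_0\xi^2+2kY\xi)}e^{-2c_0y\xi}\,d\xi$ is uniformly bounded on $\{y\ge 0\}$; for $y<0$, the compact support of $\varphi$ makes $g$ automatically bounded. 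On the boundaries, $w^+(y,Y)=\varphi^+(y;1)-\varphi(y)=0$ on $D^+$, while $w^+(y,-Y)=-\varphi(y)$ on $D^-$ is bounded and even of compact support.

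With bounded $g$ and bounded Dirichlet data on $D^-$, the existence and uniqueness of a bounded $w^+$ is obtained by exactly the same interior/exterior Dirichlet decomposition and contraction fixed point used in the proof of Proposition \ref{chap2:p1}: the only modification is to allow nonzero bounded data on the lower boundary in the interior problem and nonzero bounded data $\zeta$ on the artificial line $y=\bar y$ in the exterior problems, which does not affect the $a\ priori$ estimates nor the contraction factor $\rho<1$ coming from the strict subharmonic-type bound on $\psi$. Uniqueness of the decomposition $v^+=\varphi^+\mathbf{1}_{\{y>0\}}+\tilde v^+$ with $\tilde v^+$ bounded then follows because any two such decompositions give the same $w^+$, and $w^+$ is unique.

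The main technical point — and the only place where one must be careful — is the boundedness of $\varphi_y$ on $\{y>0\}$, since otherwise the $k(Y-z)\varphi_y$ term in $g$ could blow up and the analogue of Proposition \ref{chap2:p1} would not apply directly. This is handled by the explicit integral representation above. Finally, the statement for $v^-$ is obtained either by repeating the argument verbatim with $\varphi^-$ in place of $\varphi^+$, or, more cheaply, by using the symmetry relation $\varphi^-(y;1)=\varphi^+(-y;1)$ together with the $(y,z)\mapsto(-y,-z)$ invariance of the operator $A$ to transport the $v^+$ result to $v^-$.
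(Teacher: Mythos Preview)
Your proposal is correct and follows essentially the same route as the paper: extend $\varphi^+$ to a $C^2$ function $\varphi$ with compact support on $\{y<0\}$, set $w^+=v^+-\varphi$, check that the resulting right-hand side $g$ and the boundary datum $-\varphi$ on $D^-$ are bounded, and then invoke the argument of Proposition~\ref{chap2:p1}. Your explicit verification that $\varphi_y$ is uniformly bounded on $\{y\ge 0\}$ via the integral formula is a welcome addition, since the paper asserts boundedness of $g$ without isolating this point.
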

\begin{proof}
We just define $\varphi(y)$ extension of $\varphi^+(y)$ for $y<0$ as explained before and consider $w^+(y,z)$ solution of \eqref{chap2:wplus}. 
We know that $w^+(y,z)$ is bounded and we have $v^+(y,z) = \varphi(y) + w^+(y,z) = \varphi^+(y) \mathbf{1}_{\{ y >0 \}} + \varphi(y) \mathbf{1}_{\{ y < 0 \}} + w^+(y,z)$
which is of the form \eqref{chap2:vplus} with $\tilde{v}^+(y,z) = \varphi(y) \mathbf{1}_{\{ y<0 \}} + w^+(y,z)$.
\end{proof}
\subsection{The complete Problem \eqref{chap2:v}}
Finally, we consider the complete Problem \eqref{chap2:v}, we can state 
\begin{theorem}
There exists a unique solution of \eqref{chap2:v} of the form $v(y,z;f) = \varphi^+(y;f) \mathbf{1}_{\{ y > 0 \}} + \varphi^-(y;f) \mathbf{1}_{\{ y < 0 \}} + \tilde{w}(y,z)$
where $\tilde{w}(y,z)$ is a bounded function which can be written as $\tilde{w}= v_e + w^+ + w^-$.
\end{theorem}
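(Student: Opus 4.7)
The plan is to prove this theorem by superposition, reducing the full problem to the three sub-problems \eqref{chap2:ve}, \eqref{chap2:vplus}, \eqref{chap2:vmoins} already treated in Propositions \ref{chap2:p1} and \ref{chap2:p2}. By linearity of the operators $A$, $B_+$, $B_-$, if $v_e$, $v^+$, $v^-$ solve \eqref{chap2:ve}, \eqref{chap2:vplus}, \eqref{chap2:vmoins} respectively, then their sum $v := v_e + v^+ + v^-$ satisfies $Av = f$ in $D$. On the upper boundary $D^+$ the traces combine to $0 + \varphi^+(y;f) + 0 = \varphi^+(y;f)$, which by the definition \eqref{chap2:phiplus} of $\varphi^+$ gives exactly $B_+ v = f$ on $D^+$ together with the local condition $v(0^+,Y) = \varphi^+(0^+;f) = 0$; the analogous check works on $D^-$. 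So the first step is just to record this superposition and verify these boundary identifications.

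Next, I would assemble the announced form. Proposition \ref{chap2:p1} gives a bounded $v_e$. Proposition \ref{chap2:p2} gives $v^+(y,z) = \varphi^+(y;f)\mathbf{1}_{\{y>0\}} + \tilde{v}^+(y,z)$ with $\tilde{v}^+$ bounded, and symmetrically $v^-(y,z) = \varphi^-(y;f)\mathbf{1}_{\{y<0\}} + \tilde{v}^-(y,z)$. Summing, one obtains
\[
v(y,z;f) = \varphi^+(y;f)\mathbf{1}_{\{y>0\}} + \varphi^-(y;f)\mathbf{1}_{\{y<0\}} + \tilde{w}(y,z),
\]
with $\tilde{w} = v_e + \tilde{v}^+ + \tilde{v}^-$ bounded; after absorbing the compactly supported extensions of $\varphi^{\pm}$ into the notation, this is the decomposition $\tilde{w} = v_e + w^+ + w^-$ of the statement, since the functions $w^{\pm}$ constructed in the proof of Proposition \ref{chap2:p2} are precisely the bounded solutions of \eqref{chap2:wplus} and its mirror image.

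For existence this is the whole argument. For uniqueness I would argue as follows: suppose $v_1, v_2$ are two solutions of the stated form. The logarithmically growing pieces $\varphi^+(y;f)\mathbf{1}_{\{y>0\}} + \varphi^-(y;f)\mathbf{1}_{\{y<0\}}$ are the same in both, so the difference $w := v_1 - v_2 = \tilde{w}_1 - \tilde{w}_2$ is bounded and satisfies $Aw = 0$ in $D$, $B_\pm w = 0$ on $D^\pm$, with $w(0^+,Y) = w(0^-,-Y) = 0$. The main obstacle is to conclude $w \equiv 0$: this is not a standard maximum principle statement because $D$ is unbounded and the boundary is degenerate along $D^+\cup D^-$. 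I would handle it by the same interior/exterior Dirichlet splitting used in Proposition \ref{chap2:p1}, using the barrier $\theta(y,z) = \exp(\lambda c_0(y^2+kz^2))$ for the interior piece and the linear barrier $\rho(z) = \varepsilon (Y\mp z)/\bar y$ for the exterior pieces, and then letting $\bar y_1 \to \infty$ and $\varepsilon \to 0$; since the barriers vanish at infinity in the appropriate limit, boundedness of $w$ forces $w\equiv 0$, giving uniqueness.
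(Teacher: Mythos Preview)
Your proposal is correct and follows exactly the paper's approach: the paper's proof is the single sentence ``We just collect the results of Propositions \ref{chap2:p1} and \ref{chap2:p2}'', which is precisely the superposition $v=v_e+v^++v^-$ together with the decompositions $v^\pm=\varphi^\pm\mathbf{1}_{\{\pm y>0\}}+\tilde v^\pm$ that you spell out. Your separate uniqueness discussion goes a bit beyond what the paper writes explicitly (the paper relies on the uniqueness already asserted in Propositions \ref{chap2:p1} and \ref{chap2:p2}), but it is in the same maximum-principle spirit and does no harm.
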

\begin{proof}
We just collect the results of Propositions \ref{chap2:p1} and \ref{chap2:p2}.
\end{proof}

\section{Ergodic Theorem}
\label{mainresult}
\begin{proof}[Proof of Theorem \ref{chap2:th1}]
We first prove the result when $f$ is symmetric. In that case, we can write
\begin{equation}\label{chap2:e25}
u_\lambda(y,z;f) = v_\lambda(y,z;f) + \frac{v_\lambda(0^-,Y;f)}{v_\lambda(0^-,Y;1)} \left ( \frac{1}{\lambda} - v_\lambda(y,z;1) \right )
\end{equation}
Indeed, we know that $u_\lambda(y,z;f)$ and  $v_\lambda(y,z;f)$ are symmetric. Setting $\tilde{u}_{\lambda}(y,z;f) = u_{\lambda}(y,z;f) - v_{\lambda}(y,z;f)$,  
we obtain
\begin{equation}
\lambda \tilde{u}_{\lambda} + A \tilde{u}_{\lambda}    = 0 \quad \textup{in} \quad D, \quad 
\lambda \tilde{u}_{\lambda} + B_+ \tilde{u}_{\lambda} = 0 \quad \textup{in} \quad D^+,\quad
\lambda \tilde{u}_{\lambda} + B_- \tilde{u}_{\lambda}  = 0 \quad \textup{in} \quad D^-
\label{utilde}
\end{equation}
with the boundary conditions $\tilde{u}_\lambda(0^+,Y;f) - \tilde{u}_\lambda(0^-,Y;f) = v_\lambda(0^-,Y;f)$ and $\tilde{u}_\lambda(0^+,-Y;f) - \tilde{u}_\lambda(0^-,-Y;f) = -v_\lambda(0^+,-Y;f)$.
This last condition is automatically satisfied, thanks to the previous one and the symmetry. The function $\frac{1}{\lambda}-v_\lambda(y,z;1)$ satisfies the three partial differential equations on $D$, $D^+$ and $D^-$. So,  $\tilde{u}_\lambda = C \left ( \frac{1}{\lambda} -v_\lambda(y,z;1) \right )$
and writing the first boundary condition, we have $\tilde{u}_\lambda(0^+,Y;f) - \tilde{u}_\lambda(0^-,Y;f) = -C \left ( v_\lambda(0^+,Y;1) - v_\lambda(0^-,Y;1) \right ) =  C v_\lambda(0^-,Y;1)$.
Hence, $C = \frac{v_\lambda(0^-,Y;f) }{v_\lambda(0^-,Y;1)}$ and formula \eqref{chap2:e25} has been obtained.
Now, we have $\nu_\lambda(f) \to \nu(f)= \frac{v(0^-,Y;f)}{v(0^-,Y;1)}$, as $\lambda \to 0$. 
If we define $u_\lambda^\star(y,z;f) = u_\lambda(y,z;f) - \frac{\nu_\lambda(f)}{\lambda} = v_\lambda(y,z;f) - \nu_\lambda(f)v_\lambda(y,z;1)$.
The function $u_\lambda^\star(y,z;f) \to v(y,z;f) - \nu(f)v(y,z;1) = v(y,z;f - \nu(f)), \quad \lambda \to 0$.
Also from its definition the function $u_\lambda^\star(y,Y;f)$ and $u_\lambda^\star(y,-Y;f)$ are continuous. From the choice of $\nu(f)$ the function
$v(y,Y; f - \nu(f))$ is continuous.
Now, since $f - \nu(f)$ is symmetric  $v(0^+,-Y; f - \nu(f)) - v(0^-,-Y; f - \nu(f)) = v(0^+,Y; f - \nu(f)) - v(0^-,Y; f - \nu(f)) = 0$. 
So the result is proven when $f$ is symmetric. We now consider the situation when $f$ is antisymmetric. We know that $u_\lambda(y,z;f)$ is antisymmetric. Similarly $v_\lambda(y,z;f)$ is antisymmetric. Consider $\pi_\lambda^-$ and $\pi_\lambda^+$ defined by
\begin{equation}
\lambda \pi_\lambda^+ + A \pi_\lambda^+ = 0 \quad \textup{in} \quad D,\quad
\lambda \pi_\lambda^+  + B_+ \pi_\lambda^+  = 0 \quad \textup{in} \quad D^+,\quad
\pi_\lambda^+  = 0 \quad \textup{in} \quad D^-
\label{chap2:pilambdaplus}
\end{equation}
with the boundary condition $\pi_\lambda^+(0^+,Y)=1$
and
\begin{equation}
\lambda \pi_\lambda^- + A \pi_\lambda^- = 0  \quad \textup{in} \quad D,\quad
\pi_\lambda^- = 0  \quad \textup{in} \quad D^+,\quad
\lambda \pi_\lambda^-  + B_+ \pi_\lambda^-  = 0 \quad \textup{in} \quad D^-
\label{chap2:pilambdamoins}
\end{equation}
with the boundary condition $\pi_\lambda^-(0^-,-Y)=1$.
We have $\pi_\lambda^-(y,z) = \pi_\lambda^-(-y,-z)$, we then state the formula $u_\lambda(y,z;f) = v_\lambda(y,z;f) - \frac{\left (\pi_\lambda^+(y,z)-\pi_\lambda^-(y,z) \right ) v_\lambda(0^+,-Y;f)}{1-\pi_\lambda^+(0^-,Y) + \pi_\lambda^+(0^+,-Y)}$.
So we see that $u_\lambda(y,z;f)$ converges as $\lambda \to 0$,without substracting a number $\frac{\nu_\lambda(f)}{\lambda}$. 
The function $u_\lambda(y,z;f)$ converges pointwise to $u(y,z;f) = v(y,z;f) - \frac{\left (\pi^+(y,z)-\pi^-(y,z) \right ) v(0^+,-Y;f)}{2\pi^-(0^-,Y)}$.
So when $f$ is antisymmetric, the results \eqref{chap2:e23}-\eqref{chap2:e24} hold with $\nu_\lambda(f)=0$ and $\nu(f)=0$.
For the general case, we can write $f = f_{\textup{sym}} + f_{\textup{asym}}$ with $f_{\textup{sym}}(y,z) = \frac{f(y,z)+f(-y,-z)}{2}, \quad f_{\textup{asym}}(y,z) = \frac{f(y,z)-f(-y,-z)}{2}$.
We have $\nu(f_\textup{sym})  =  \frac{v(0^-,Y; f_\textup{sym})}{v(0^-,Y;1)}$
and thus $\nu(f_\textup{sym})  =  \frac{v(0^-,Y; f) + v(0^+,-Y; f)}{2v(0^+,-Y;1)}$
Since $\nu(f_\textup{asym}) = 0$,
we deduce
$\nu(f) = \nu(f_\textup{sym})  =  \frac{v(0^-,Y; f) + v(0^+,-Y; f)}{2v(0^+,-Y;1)}$.
We obtain also the representation formula 
$u(y,z;f) = v(y,z;f) - \nu(f) v(y,z;1) + \frac{\pi^+(y,z)-\pi^-(y,z)}{4 \pi^-(0^-,Y)} (v(0^-,Y;f)-v(0^+,-Y;f))$
and the result is obtained.
\end{proof}


\begin{thebibliography}{00}
\bibitem{BenLio}
\newblock A. Bensoussan, J-L. Lions,
\newblock \emph{Contr\^{o}le impulsionnel et in\'equations quasi variationnelles},
\newblock Dunod, Paris 1982
     
\bibitem{BenTuri1}
     \newblock A. Bensoussan, J. Turi, 
     \newblock \emph{Degenerate Dirichlet Problems Related to the Invariant Measure of Elasto-Plastic Oscillators},
     \newblock Applied Mathematics and Optimization, \textbf{58(1)} (2008), 1--27.

\bibitem{BenTuri2}
 \newblock A. Bensoussan, J. Turi, 
  \newblock \emph{On a class of partial differential equations with nonlocal Dirichlet boundary conditions}. 
   \newblock Appl. Num. Par. Diff. Eq. \textbf{15} (2010), pp. 9-23.

\bibitem{KarSchar} 
     \newblock D. Karnopp, T.D. Scharton, 
     \newblock \emph{Plastic deformation in random vibration},
     \newblock The Journal of the Acoustical Society of America,  \textbf{39} (1966), 1154--61.

\end{thebibliography}
\end{document}